\newif\ifdraft\draftfalse
\newif\ifcite\citefalse
\newif\ifblow\blowtrue
\ifcite\usepackage{showkeys}\else\usepackage[notcite,notref]{showkeys}\fi\fi
\newtheorem{theorem}
\newtheorem{lemma}
\theoremstyle{remark}
\theoremstyle{definition}
\newtheorem{definition}
\newtheorem{para}[equation]{}
\theoremstyle{remark}
\newtheorem{remark}
\def\bz{\mathbb Z}
\def\br{\mathbb R}
\def\bp{\mathbb P}
\def\bean{\begin{eqnarray}}
\def\eean{\end{eqnarray}}
\def\bea{\begin{eqnarray*}}
\def\eea{\end{eqnarray*}}
\def\x0{X_{s_0}}
\def\2p{\bp^1\times \bp^1}
\def\a{\alpha}
\def\nm{\nonumber}
\def\er{\eqref}
\def\vol{d\mathrm{vol}}
\def\ben{\begin{equation}}
\def\een{\end{equation}}
\def\fo{\mathfrak{o}}
\def\fs{\mathfrak{s}}
\newcommand\codim{\mathop{\mathrm{codim}}\nolimits}
\newcommand\ind{\mathop{\mathrm{Ind}}\nolimits}
\newcommand\pmv{\partial_- V}
\newcommand\wt{\widetilde}
\begin{document}
\title{Secondary Chern-Euler class for general submanifold}
\author{Zhaohu Nie}
\email{znie@psu.edu}
\address{Department of Mathematics\\
Penn State Altoona\\
3000 Ivyside Park\\
Altoona, PA 16601, USA}

\date{\today}
\subjclass[2000]{57R20}
\keywords{Secondary Chern-Euler class, normal sphere bundle, Euler characteristic, index, non-isolated singularities, blow-up}

\begin{abstract}
We define and study the secondary Chern-Euler class for a general submanifold of a Riemannian manifold. Using this class, we define and study the index for a vector field with non-isolated singularities on a submanifold. As an application, our studies give conceptual proofs of a result of Chern. 
\end{abstract}

\maketitle
% \tableofcontents

The objective of this paper is to define, study and use the 
secondary Chern-Euler class for a general submanifold of a Riemannian manifold. 
We give the definition in Definition \ref{2nd-ce} in Section 1. In Section 2, we study cohomologically the class and its relation with several other natural homology and cohomology classes.
The cases when the codimension of the submanifold is one or greater than one are different, and we consider both cases.
In Section 3, we use the secondary Chern-Euler class to define the index for a vector field with non-isolated singularities on a submanifold in Definition \ref{index}. To this end, we develop the 
notion of blow-up of the submanifold along the vector field. We then obtain three formulas in Theorem \ref{formulas} to compute  the index. Our studies, in particular, give three conceptual proofs of a classical result of Chern \cite[(20)]{chern2} concerning the paring of the secondary Chern-Euler class with the normal sphere bundle of the submanifold. Two of the proofs are given in Section 2, while the third in Section 3. 

The author thanks the referee for pointing out an oversimplifying assumption in an earlier version of the paper. 

\section{Secondary Chern-Euler class for a general submanifold}

Let $X$ be a connected oriented compact Riemannian manifold of dimension $n$. (Throughout the paper, $n=\dim X$.) The Gauss-Bonnet theorem  (see, e.g., \cite[(9)]{chern}) asserts that 
\ben
\label{gauss bonnet}
\displaystyle\int_X \Omega=\chi(X),
\een
where $\chi(X)$ is the Euler characteristic of $X$, and $\Omega$ is the Euler curvature form defined as follows. Choose local positively oriented orthonormal frames $\{e_1,\cdots,e_n\}$ of the tangent bundle $TX$. Let $(\omega_{ij})$ and 
$(\Omega_{ij})$ be the $\fs\fo(n)$-valued connection forms and curvature forms
for the Levi-Civita connection $\nabla$ of the Riemannian metric on $X$ defined by 
\begin{gather}
\nabla e_i=\sum_{j=1}^n \omega_{ij}e_j,\label{def-omega}\\
\Omega_{ij}=d\omega_{ij}-\sum_{k=1}^n \omega_{ik}\omega_{kj}.\label{def-O}
\end{gather}
Then \cite[(10)]{chern2} defines the degree $n$ form 
\ben
\label{omega}
\Omega=
\begin{cases}
0, & n\text{ odd,}\\
\displaystyle(-1)^{m}\frac{1}{2^{2m}\pi^m m!}\sum_{i} \epsilon(i)\Omega_{i_1i_2}\cdots\Omega_{i_{n-1}i_n}, & n=2m \text{ even},
\end{cases}
\een
where the summation runs over all permutations $i$ of $\{1,2,\cdots,n\}$, and $\epsilon(i)$ is the sign of $i$. 
$\Omega$ does not depend on the choice of local frames. 

Chern \cite[(9)]{chern2} defines on the unit sphere bundle $STX$, consisting of unit vectors in the tangent bundle $TX$, a form $\Phi$ (called $\Pi$ in Chern's papers) of degree $(n-1)$ as follows. 
To a unit tangent vector $v\in STX$, we attach a local positively oriented orthonormal frame $\{e_1,\cdots,e_{n-1},e_n\}$ such that $v=e_n$. 
For $k=0,1,\cdots,[\frac{n-1}2]$, define
\ben\label{phi_j}
\Phi_k=\sum_{\alpha} \epsilon(\alpha) \Omega_{\alpha_{1}\alpha_{2}}\cdots \Omega_{\alpha_{2k-1}\alpha_{2k}}\omega_{\alpha_{2k+1}n}\cdots\omega_{\alpha_{n-1}n},
\een
where the summation runs over all permutations $\alpha$ of $\{1,2,\cdots,n-1\}$. Also define
\begin{gather}
\Phi=\frac{1}{(n-2)!!|S^{n-1}|} \sum_{k=0}^{[\frac{n-1}2]} (-1)^k \frac{1}{2^k k! (n-2k-1)!!} \Phi_k,
\label{phi}
\end{gather}
where 
\ben
\label{sphere}
|S^{n-1}|=\begin{cases}
 \frac{(2\pi)^m}{(n-2)!!}, & n=2m \text{ even},\\
  \frac{2(2\pi)^{m}}{(n-2)!!}, & n=2m+1\text{ odd}
\end{cases}
\een
is the 
surface area of the unit $(n-1)$ sphere. 
The $\Phi_k$ and hence $\Phi$ do not depend on the choice of $e_1,\cdots,e_{n-1}$. Note in particular that the zeroth term
\ben
\label{0th term}
\wt{\Phi_0}=\frac{1}{(n-2)!!|S^{n-1}|} \frac 1{(n-1)!!}\sum_\alpha \epsilon(\alpha)\omega_{\alpha_{1}n}\cdots\omega_{\alpha_{n-1}n}
=\wt{\vol}_{n-1}
\een
is the unit volume form when restricted to a fiber sphere $ST_xX$ for $x\in X$. 

Then 
\cite[(11)]{chern2} proves that 
\ben
\label{dphi}
d\Phi=-\Omega.
\een
The $\Phi$, the Stokes' theorem, \er{0th term} and the Poincar\'e-Hopf theorem were  then used in \cite[(18)]{chern2} and \cite[(25)]{chern} to give an intrinsic proof for the Gauss-Bonnet theorem \er{gauss bonnet}.

\begin{definition}\label{2nd-ce} Let $M\subset X$ be a connected oriented submanifold of dimension $m$.
Then $\Phi$  in \er{phi} is a closed form when  restricted to $STX|_M$ in view of \er{dphi} and \er{omega}, since even if $n$ is even, $\Omega|_M=0$ by dimensional reasons. 

We call the restriction of $\Phi$ to $STX|_M$ the \emph{secondary Chern-Euler form of $M$ in $X$}, and its cohomology class $[\Phi]\in H^{n-1}(STX|_M,\br)$ \emph{the secondary Chern-Euler class}. 
\end{definition}

\section{Cohomological studies}

In this section, we first prove the following
\begin{theorem} There are three cases concerning the secondary Chern-Euler class $[\Phi]\in H^{n-1}(STX|_M,\br):$
\begin{enumerate}
\item When $\mathrm{codim}\, M\geq 2$, $[\Phi]\in H^{n-1}(STX|_M,\bz)$ is integral and independent of the choice of the connection 
%$(\omega_{ij})$ 
and hence of the Riemannian metric on $X$;
\item When $\mathrm{codim}\, M=1$ and $\dim X$ is odd, $[\Phi]\in H^{n-1}(STX|_M,\bz)\otimes \frac{1}{2}$ is half-integral and independent of the choice of the connection; 
\item When $\mathrm{codim}\, M=1$ and $\dim X$ is even, $[\Phi]\in H^{n-1}(STX|_M,\br)$ is only real and depends on the connection.
\end{enumerate}
\end{theorem}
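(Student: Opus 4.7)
The plan is to analyze $[\Phi]$ via the Gysin sequence of the sphere bundle $\pi \colon STX|_M \to M$ (fiber $S^{n-1}$), exploiting the normalization $\int_{\mathrm{fib}} \Phi = 1$ from \eqref{0th term}, and to reduce the three statements to a computation of the pulled-back form $s^*\Phi$ along a suitable section $s$ of $\pi$.

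In every case, the Euler class $e(TX|_M) \in H^n(M,\mathbb{Z})$ vanishes: in case (1) because $\dim M \le n-2$ forces $H^n(M) = 0$; in cases (2) and (3) because the oriented normal line bundle $\nu M$ is trivial, so $TX|_M \cong TM \oplus \underline{\mathbb{R}}$ has zero Euler class. Hence $\pi$ admits a section $s$---in codimension one the canonical choice is the outward unit normal $\nu$---and the Gysin sequence collapses integrally to
\[
0 \to H^{n-1}(M,\mathbb{Z}) \xrightarrow{\pi^*} H^{n-1}(STX|_M,\mathbb{Z}) \xrightarrow{\int_{\mathrm{fib}}} H^0(M,\mathbb{Z}) \to 0,
\]
split by $s^*$. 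Writing $\tau \in H^{n-1}(STX|_M,\mathbb{Z})$ for the integral generator with $s^*\tau = 0$ and $\int_{\mathrm{fib}}\tau = 1$, we obtain in real cohomology the decomposition $[\Phi] = \tau + \pi^* s^*[\Phi]$, and everything now hinges on the class $s^*[\Phi] \in H^{n-1}(M,\mathbb{R})$. In case (1) this group vanishes because $\dim M \le n-2$, so $[\Phi] = \tau$ is integral; moreover any two connections give forms with the same fiber integral $1$, so their difference lies in $\pi^* H^{n-1}(M,\mathbb{R}) = 0$, yielding connection-independence.

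For cases (2) and (3) one takes $s = \nu$ and computes $\nu^*\Phi$ by noting that $\omega_{\alpha n}|_M$ becomes the second fundamental form and that the Gauss equation expresses $\Omega^X_{\alpha\beta}|_M$ in terms of the intrinsic curvature $\Omega^M_{\alpha\beta}$ and quadratic terms in the shape operator. Chern's combinatorial coefficients in \eqref{phi} are arranged so that, when $n$ is odd and $\dim M = n-1$ is even (case 2), the many terms produced by the Gauss-equation expansion reassemble into exactly one half of the intrinsic Gauss--Bonnet--Chern integrand of $M$, giving $\nu^*[\Phi] \in \tfrac12 H^{n-1}(M,\mathbb{Z})$ and a topological, hence connection-independent, invariant; consequently $2[\Phi] \in H^{n-1}(STX|_M,\mathbb{Z})$ and $[\Phi]$ is independent of the connection. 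When $n$ is even and $\dim M = n-1$ is odd (case 3), the intrinsic Pfaffian vanishes by parity, and the remaining extrinsic terms in $\nu^*\Phi$ do not combine into any topological quantity and depend genuinely on the connection. The main technical obstacle is this last combinatorial verification: tracking signs and coefficients in the Gauss-equation expansion of $\nu^*\Phi$ carefully enough to show that for $n$ odd the extrinsic terms collapse into precisely half of the intrinsic Euler integrand of $M$, while for $n$ even no such collapse occurs. This parity phenomenon, ultimately traceable to the sign $(-1)^n$ of the antipodal map on $S^{n-1}$, is exactly the content of Chern's classical identity \cite[(20)]{chern2}.
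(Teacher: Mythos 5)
Your Gysin-sequence setup is sound, and it is essentially the cohomological mirror of the paper's own argument: the paper uses the homology exact sequence coming from the cofibration $STX|_M\to DTX|_M\to Th(TX)|_M$, i.e.\ \eqref{1 dim}, together with the normalization \eqref{pd}, and in codimension one pairs against the normal section $M^+=\vec n(M)$; your splitting by a section and the decomposition $[\Phi]=\tau+\pi^*s^*[\Phi]$ is the dual bookkeeping. Case (a) is complete as you wrote it. The difficulty is that in both codimension-one cases you assert, rather than prove, exactly the steps that carry the content of the theorem.

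For case (b) everything reduces to $\nu^*\Phi=\tfrac12\tilde\Omega$ (the pullback of the intrinsic Euler form of $M$; the paper's \eqref{1/2}), or at least to $\int_{M^+}\Phi=\tfrac12\chi(M)$ as in Lemma \ref{do it}. You call this ``the main technical obstacle'' and then discharge it by saying it is ``exactly the content of'' Chern's identity \eqref{chern's}. That is not accurate: \eqref{chern's} is the integrated pairing with the whole normal sphere bundle, and in codimension one $SNM=M^+-M^-$, so to extract the statement about $M^+$ alone you also need $\int_{M^-}\Phi=(-1)^n\int_{M^+}\Phi$ (Remark \ref{codim-1}); you gesture at this antipodal sign but never establish it, and the collapse of the Gauss-equation expansion into half the Pfaffian is never carried out. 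The paper does this computation explicitly: with $e_n=\vec n$, expand \eqref{phi_j}, substitute $\tilde\Omega_{\alpha\beta}=\Omega_{\alpha\beta}-\omega_{\alpha n}\omega_{\beta n}$, and compare coefficients via the multinomial theorem. Some such computation, or a correct reduction to a citable statement, must appear; as written case (b) is an assertion. Case (c) has the more serious gap: ``the remaining extrinsic terms do not combine into any topological quantity and depend genuinely on the connection'' is precisely the claim to be proved, and a non-invariance claim requires a witness. The paper supplies one: for $S^1\subset S^2$ the relative Gauss--Bonnet theorem \eqref{pair M} gives $\int_{(S^1)^+}\Phi=\chi(D)-\int_D\Omega$, which visibly varies with the metric. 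Supplying such an example, and a proof (or precise citation plus the parity identity) for the half-Pfaffian statement, would complete your argument.
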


\begin{proof}
%We start by studying cohomologically $H^{n-1}(STX|_M,\bz)$. 
% and the relation of $[\Phi]$ with several other natural homology and cohomology classes.
We prove the theorem by computing the integrals of $\Phi$ over generators of $H_{n-1}(STX|_M,\bz)$. Consider the following cofibration sequence
\ben\label{cofib}
STX|_M\to DTX|_M\to Th(TX)|_M,
\een
where $DTX|_M$ is the unit disk bundle over $M$, which is homotopic to $M$, and $Th(TX)|_M$ is the Thom space of $TX$ restricted to $M$. 
This gives rise to an exact sequence
\begin{gather*}
0 \to H_n(Th(TX)|_M,\bz)\to  H_{n-1}(STX|_M,\bz)\to H_{n-1}(M,\bz)\to 0
\end{gather*}
when $n\geq 2$, which is obviously the only interesting case.
Therefore one has
 \ben
 \label{1 dim}
 H_{n-1}(STX|_M,\bz)\cong 
\begin{cases}
 \bz & \mathrm{codim}\,M\geq 2,\\
 \bz\oplus \bz & \mathrm{codim}\,M=1.
 \end{cases}
 \een 

One generator of $H_{n-1}(STX|_M,\bz)$ is a fiber sphere $ST_x X$, for $x\in M$. (For simplicity of notation, we do not 
distinguish cycles from the homology classes, although we do distinguish closed forms from the cohomology classes.) 
The paring of the secondary Chern-Euler class  $[\Phi]\in H^{n-1}(STX|_M)$ with $ST_xX$ is  
\ben
\label{pd}
\int_{ST_x X}\Phi=1,
\een 
by \er{0th term} since all curvature forms vanish when restricted to $x$. 

\er{1 dim} and \er{pd} imply case (a). 

%$[\Phi]$ is integral and does not depend on the metric, in the case of $\mathrm{codim}\,M\geq 2$.  

When $\mathrm{codim}\,M=1$, let $\vec n$ denote the unit normal vector field of $M$ such that for a positively oriented frame $\{e_1,\cdots,e_{n-1}\}$ of $TM$, $\{e_1,\cdots,e_{n-1},\vec n\}$ is a positively oriented frame of $TX$. Then the image $\vec n(M)$, which we will call $M^+$, defines the other generator of $H_{n-1}(STX|_M,\bz)$ in \er{1 dim}. 

\er{1 dim}, \er{pd} and Lemma \ref{do it} below imply case (b). 

A simple example like a general circle $S^1$ on a sphere $S^2$ shows case (c). In this example, the relative Gauss-Bonnet theorem \cite[(19)]{chern2} asserts
\begin{equation}
\label{pair M}
\int_{(S^1)^+} \Phi=\chi(D)-\int_{D}\Omega,
\end{equation}
where $D$ is the region of $S^2$ bounded by $S^1$ such that $\vec n$ points outward to $D$. Therefore $\chi(D)=1$, but $\int_D\Omega$ takes real values and depends on the metric. 
\end{proof}

\begin{lemma}\label{do it} When $\mathrm{codim}\,M=1$ and $\dim X$ is odd, 
\ben\label{odd}
\int_{M^+} \Phi=\frac 1 2\chi(M).
\een
\end{lemma}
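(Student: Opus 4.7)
The plan is to establish the pointwise identity $\vec{n}^*\Phi = \tfrac12\,\Omega^M$ on $M$, where $\Omega^M$ is the Euler curvature form of $M$ in its induced Riemannian metric. Given this, the lemma is immediate: since $\dim M = n-1 = 2m$ is even, the Gauss--Bonnet theorem \er{gauss bonnet} for $M$ yields
\[\int_{M^+}\Phi \;=\; \int_M \vec{n}^*\Phi \;=\; \tfrac12\int_M \Omega^M \;=\; \tfrac12\,\chi(M).\]

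To compute $\vec{n}^*\Phi$, I would extend a positively oriented orthonormal frame $\{e_1,\dots,e_{n-1}\}$ of $TM$ to the frame $\{e_1,\dots,e_{n-1},\vec{n}\}$ of $TX|_M$, and set $A_a := \omega^X_{an}|_M$ for the components of the second fundamental form. The Gauss formula gives $\omega^M_{ab} = \omega^X_{ab}|_M$ for $a,b\in\{1,\dots,n-1\}$, and the Gauss equation then reads
\[\Omega^X_{ab}|_M \;=\; \Omega^M_{ab} + A_a\wedge A_b.\]
Substituting into \er{phi_j} and expanding each of the $k$ curvature factors, I would sort the $2^k$ resulting terms by the number $j$ of factors that contribute the intrinsic piece $\Omega^M_{\alpha_{2i-1}\alpha_{2i}}$. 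Since all factors are $2$-forms (so commute) and any permutation of the index pairs $\{2i-1,2i\}$ as blocks is even, each size-$j$ subset contributes equally after reabsorbing positions into $\alpha$, giving
\[\vec{n}^*\Phi_k \;=\; \sum_{j=0}^{k}\binom{k}{j}\,P_j, \qquad P_j := \sum_\alpha \epsilon(\alpha)\,\Omega^M_{\alpha_1\alpha_2}\cdots \Omega^M_{\alpha_{2j-1}\alpha_{2j}}\,A_{\alpha_{2j+1}}\cdots A_{\alpha_{n-1}}.\]

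Plugging this into \er{phi} with $n=2m+1$ and exchanging the order of summation, the coefficient of $P_j$ (up to the global prefactor $1/((2m-1)!!\,|S^{2m}|)$) becomes $\tfrac{(-1)^j}{2^j j!}\sum_{i=0}^{m-j}\tfrac{(-1)^i}{2^i i!\,(2(m-j-i))!!}$. Using $(2\ell)!! = 2^\ell\ell!$, the inner sum equals $\tfrac{1}{2^{m-j}(m-j)!}(1-1)^{m-j}$ by the binomial theorem, which vanishes unless $j = m$. Hence every mixed term carrying at least one second-fundamental factor $A_a$ is annihilated, and only the intrinsic $P_m$ survives; a short constant comparison against \er{omega} applied to $M$ (using $|S^{2m}| = 2(2\pi)^m/(2m-1)!!$) then delivers the clean factor $\tfrac12$. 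The main obstacle is precisely this combinatorial cancellation $\sum_i(-1)^i\binom{m-j}{i} = \delta_{j,m}$: Chern's alternating coefficients in \er{phi} are exactly calibrated so that all extrinsic contamination from the Gauss equation drops out, leaving Chern--Gauss--Bonnet on $M$ to finish the job.
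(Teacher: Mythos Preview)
Your proposal is correct and follows essentially the same approach as the paper's direct proof: establish the pointwise identity $\vec n^*\Phi=\tfrac12\Omega^M$ via the Gauss equation, then apply Gauss--Bonnet on $M$. The only executional difference is the direction of substitution: you insert the Gauss equation $\Omega^X_{ab}=\Omega^M_{ab}+A_aA_b$ into $\Phi$ and then invoke the binomial cancellation $\sum_i(-1)^i\binom{m-j}{i}=\delta_{j,m}$ to kill the extrinsic terms, whereas the paper substitutes $\tilde\Omega_{\alpha\beta}=\Omega_{\alpha\beta}-\omega_{\alpha n}\omega_{\beta n}$ into the intrinsic Euler form $\tilde\Omega$, expands by the multinomial theorem, and compares directly with $\Phi|_{M^+}$ --- no cancellation is needed since both sides are then written in the same ambient ``basis'' $\Omega_{\alpha\beta}$, $\omega_{\alpha n}$ and visibly differ by a factor of $2$.
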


\begin{proof}
This can be seen in two ways. 

A direct way, using ideas from Chern \cite{chern2}, is by showing 
\ben
\label{1/2}
\Phi|_{M^+}=\frac 1 2 \tilde\Omega,
\een
where $\tilde\Omega$ is the pullback of the Euler curvature form of $M$ for the induced metric to $M^+\subset STX|_M$. Then the Gauss-Bonnet theorem \er{gauss bonnet} gives \er{odd}. \er{1/2} is proved as follows. 

Choose local frames $\{e_1,\cdots,e_{n-1},e_n\}$ for $TX|_M$ such that $\{e_1,\cdots,e_{n-1}\}$ are local frames for $TM$ and $e_n=\vec n$ . Assume that $\dim X=n=2m+1$. Then by \er{phi},  \er{phi_j} and \er{sphere}, one has
\begin{align*}
\Phi|_{M^+}&=\frac{1}{2^{m+1}\pi^m}\sum_{k=0}^m \frac{(-1)^k}{2^k k! (2m-2k)!!}\sum_\alpha \epsilon(\a)\Omega_{\alpha_1\alpha_2}\cdots\Omega_{\alpha_{2k-1}\alpha_{2k}}\omega_{\alpha_{2k+1}n}\cdots\omega_{\alpha_{2m}n}\\
&=\frac 1 {2^{2m+1}\pi^m}\sum_{k=0}^m \frac{(-1)^k}{k! (m-k)!}\sum_\alpha \epsilon(\a)\Omega_{\alpha_1\alpha_2}\cdots\Omega_{\alpha_{2k-1}\alpha_{2k}}\omega_{\alpha_{2k+1}n}\cdots\omega_{\alpha_{2m}n},
\end{align*}
where one uses $(2m-2k)!!=2^{m-k}(m-k)!$. 

Using \er{def-O}, one has
$$
\tilde \Omega_{\alpha\beta}=\Omega_{\alpha\beta}-\omega_{\alpha n}\omega_{\beta n}
$$
for $1\leq \alpha,\beta\leq n-1=2m$, where the $\Omega_{\a\beta}$ and $\tilde\Omega_{\a\beta}$ are the curvature forms of $X$ and $M$.  
Therefore by \er{omega} and multinomial theorem, one has
\begin{align*}
\tilde \Omega&=\frac{(-1)^{m}}{2^{2m}\pi^m m!}\sum_{\alpha} \epsilon(\alpha)(\Omega_{\alpha_1\alpha_2}-\omega_{\alpha_1 n}\omega_{\alpha_2 n})\cdots(\Omega_{\alpha_{2m-1}\alpha_{2m}}-\omega_{\alpha_{2m-1} n}\omega_{\a_{2m} n})\\
&=\frac{(-1)^{m}}{2^{2m}\pi^m m!}\sum_{k=0}^m \frac{m!}{k!(m-k)!}(-1)^{m-k}\sum_\alpha \epsilon(\a)\Omega_{\alpha_1\alpha_2}\cdots\Omega_{\alpha_{2k-1}\alpha_{2k}}\omega_{\alpha_{2k+1}n}\cdots\omega_{\alpha_{2m}n}\\
&=\frac{1}{2^{2m}\pi^m}\sum_{k=0}^m \frac{(-1)^k}{k!(m-k)!} \sum_\alpha \epsilon(\a)\Omega_{\alpha_1\alpha_2}\cdots\Omega_{\alpha_{2k-1}\alpha_{2k}}\omega_{\alpha_{2k+1}n}\cdots\omega_{\alpha_{2m}n}.
\end{align*}
Direct comparison gives \er{1/2}. 

Another way follows from Remark \ref{codim-1}, which says that in this case \er{odd} is equivalent to \er{chern's}, and our Proofs 1 and 3 of Theorem \ref{main thm}, which work in all codimensions. 
\end{proof}

\begin{remark} Case (a) is actually a manifestation of a general phenomenon for Chern-Simons forms as stated in \cite[Thm 3.9, Cor 3.17]{cs}. 
\end{remark}

\begin{remark}\label{total geod} In general, when $\codim M=1$ and $\dim X$ is even, $\int_{M^+} \Phi$ stands for the geodesic curvature of $M$. It vanishes for a totally geodesic submanifold $M$, since then all the $\omega_{\alpha n}=0$ by \er{def-omega} and the total geodesicity of $M$ (recall that we choose $e_n=\vec n$), but each summand of all the $\Phi_k$ in \er{phi_j} contains at least one $\omega_{\alpha n}$ since it is of an odd degree $n-1$. 
\end{remark}

\begin{remark} The cohomology $H^{n-1}(STX|_M,\bz)$ and the class $[\Phi]\in H^{n-1}(STX|_M)$ 
have already
been studied in \cite{sha} for $M=\partial X$, under the condition that the metric on $X$ is \emph{locally product} near $M$. 
This in particular means that $M$ is a totally geodesic submanifold of $X$. Therefore \cite[pp 1156 Special Cases]{sha} are special cases of our cases (b) and (c), in view of Remark \ref{total geod}.
\end{remark}

We now study the relation of $[\Phi]$ with some other natural homology and cohomology classes. From \er{cofib}, one has the following dual homomorphisms
\begin{gather}
%\label{del}
\delta: H^{n-1}(STX|_M,\br)
%\overset{\cong}
{\to} H^n(Th(TX)|_M,\br),\nm\\
\label{del'}
\delta': H_n(Th(TX)|_M,\br)
%\overset{\cong}
{\to} H_{n-1}(STX|_M,\br).
\end{gather}
One then has 
\ben
\label{phi & thom}
\delta[\Phi]=\gamma_{TX},
\een
where $\gamma_{TX}\in H^n(Th(TX)|_M,\bz)\cong \bz$ is the Thom class of $TX|_M$. To see this, note that 
in \er{del'} by definition
\ben
\label{hom conn}
\delta'(DT_xX/ST_xX)=ST_xX,
\een
where $DT_xX/ST_xX\in H_n(Th(TX)|_M,\bz)\cong \bz$ is a generator dual to the Thom class. \er{phi & thom} then follows from
$$
\int_{DT_xX/ST_xX}\delta[\Phi]=\int_{\delta'(DT_xX/ST_xX)}[\Phi]=\int_{ST_xX}[\Phi]=1
$$
by \er{pd}, where we denote the pairing of cohomology and homology by integration. 

\begin{para}\label{V}
Let $V$ be a generic vector field on $X$, and consider its restriction $V|_M$ on $M$. Generically, $V|_M$ has no singularities. Let $\alpha_V:M\to STX|_M$ be defined by rescaling $V|_M$, i.e., $\alpha_V(x)=\frac{V(x)}{|V(x)|},\ \forall x\in M$. 
Then $\alpha_V(M)$ is a dimension $m$ cycle in $STX|_M$, and hence defines a homology class in $H_m(STX|_M,\bz)$. 
For $x\in M$, the intersection $\alpha_V(M)\cdot ST_xX=\alpha_V(x)$ is one point. Therefore when $\codim M\geq 2$, 
\ben
\label{p.d.}
\alpha_V(M)=\text{P.D.}([\Phi])
\een
is the Poincar\'e dual of $[\Phi]$, in view of \er{pd} and \er{1 dim}.  
\end{para}

One has the decomposition
\ben
\label{decomp}
TX|_M=TM\oplus NM,
\een
where $NM$ is the normal bundle of $M$ in $X$. 
The normal sphere bundle $SNM$, consisting of unit normal vectors, defines another homology class in $H_{n-1}(STX|_M,\bz)$. 

As application of our cohomological studies, we get two conceptual proofs
of the following result about the pairing between $[\Phi]$ and $SNM$, which was first proved in \cite{chern2} in a computational way. 

\begin{theorem}\cite[(20)]{chern2} 
\label{main thm}
One has
\ben
\label{chern's}
\int_{SNM} \Phi=\chi(M).
\een
\end{theorem}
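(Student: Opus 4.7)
My plan is to deduce \er{chern's} from the Poincar\'e--Hopf theorem on $M$ via a Stokes-type argument, exploiting $d\Phi = -\Omega$ from \er{dphi} together with the vanishing $\Omega|_M = 0$ (which holds for dimensional reasons whenever $\dim M < n$). The idea is to construct an $n$-dimensional chain in $STX|_M$ whose boundary expresses $SNM$ as an integer sum of fiber spheres $ST_{p_i}X$. This approach is uniform across codimensions.

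Pick a vector field $W$ on $M$ with only non-degenerate zeros $p_1,\dots,p_\ell$, so that $\sum_i \mathrm{ind}_{p_i}(W) = \chi(M)$ by Poincar\'e--Hopf on $M$. On $M\setminus\{p_1,\dots,p_\ell\}$ set $\tau(x) = W(x)/|W(x)| \in ST_xM \subset ST_xX$. Using the orthogonal decomposition \er{decomp}, each fiber sphere $ST_xX$ identifies with the topological join $ST_xM * SN_xM$; in these coordinates define the fiberwise cone on $SN_xM$ with apex $\tau(x)$,
\bea
\bar C \;=\; \overline{\bigl\{(x,\,\cos\theta\,\tau(x) + \sin\theta\, w) : x\in M\setminus\{p_i\},\ \theta\in[0,\pi/2],\ w\in SN_xM\bigr\}} \;\subset\; STX|_M,
\eea
an $n$-dimensional chain. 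A direct boundary computation, carried out by first excising small balls $B_\epsilon(p_i)\subset M$ and then letting $\epsilon\to 0$, yields
\bea
\partial \bar C \;=\; SNM \;-\; \sum_{i=1}^\ell \mathrm{ind}_{p_i}(W)\cdot ST_{p_i}X.
\eea
Indeed, the $\theta=\pi/2$ face of the cone is $SNM$, the apex $\theta=0$ contributes no boundary, and near each $p_i$ the map $\tau\colon \partial B_\epsilon(p_i)\to ST_{p_i}M$ has degree $\mathrm{ind}_{p_i}(W)$, so the cones over $B_\epsilon(p_i)$ sweep out the join $ST_{p_i}M * SN_{p_i}M = ST_{p_i}X$ with multiplicity $\mathrm{ind}_{p_i}(W)$ as $\epsilon\to 0$.

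Stokes' theorem and \er{dphi} then give
\bea
\int_{SNM}\Phi \;-\; \sum_i \mathrm{ind}_{p_i}(W) \int_{ST_{p_i}X}\Phi \;=\; \int_{\partial \bar C}\Phi \;=\; \int_{\bar C} d\Phi \;=\; -\int_{\bar C}\pi^*(\Omega|_M) \;=\; 0,
\eea
where $\pi\colon STX\to X$. Combining with $\int_{ST_{p_i}X}\Phi = 1$ from \er{pd} and $\sum_i \mathrm{ind}_{p_i}(W) = \chi(M)$ yields \er{chern's}. The main obstacle is justifying $\bar C$ rigorously as a chain near the singularities of $\tau$ at the $p_i$ and reading off the collapsed fiber contribution at each $p_i$ as $\mathrm{ind}_{p_i}(W) \cdot [ST_{p_i}X]$ with the correct orientation; this amounts to checking that the join decomposition $ST_xX = ST_xM * SN_xM$ from \er{decomp} carries the degree of $\tau$ on $\partial B_\epsilon(p_i)$ to the correct multiplicity of the fiber cycle $ST_{p_i}X$.
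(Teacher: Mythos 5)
Your proposal is correct in outline, and it takes a genuinely different route from the paper's three proofs. The paper's Proof 1 is purely homological: via the cofibration \er{cofib} it identifies $\delta[\Phi]$ with the Thom class $\gamma_{TX}$ and evaluates on $Th(NM)$, reducing \er{chern's} to the Thom-class identity $i^*\gamma_{TX}=\gamma_{NM}\cup e_{TM}$. Proof 2, like yours, reduces to Poincar\'e--Hopf for a generic vector field on $M$, but through intersection theory: $\alpha_V(M)$ is the Poincar\'e dual of $[\Phi]$ by \er{p.d.}, and its intersection points with $SNM$ are the zeros of the tangential projection $\partial V$; this argument needs $\codim M\geq 2$ since it rests on \er{1 dim}. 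Proof 3 invokes the blow-up/index machinery of Section 3 applied to the radial vector field of a tubular neighborhood. Your construction instead produces an explicit $n$-chain in $STX|_M$ --- the fiberwise cone from $\tau=W/|W|$ to $SN_xM$ inside the join $ST_xX\cong ST_xM * SN_xM$ --- exhibiting $SNM$ as homologous to $\chi(M)$ fiber spheres, and then uses Stokes together with the closedness of $\Phi|_{STX|_M}$; this is closest in spirit to Proof 2 but avoids Poincar\'e duality altogether and therefore works uniformly in all codimensions, which is a genuine advantage. Two details remain for you to finish: (i) your claim that the apex face $\theta=0$ contributes no boundary is a dimension count only when $\codim M\geq 2$; when $\codim M=1$ that face is $(n-1)$-dimensional, and you must instead note that the two cones over $w=\pm\vec n$ carry it with opposite orientations so that it cancels, leaving the $\theta=\pi/2$ face $M^+-M^-$ in agreement with Remark \ref{codim-1}; (ii) the orientation bookkeeping at each $p_i$ (which you flag yourself) must be carried out, e.g.\ by passing to polar coordinates $x=p_i+tu$ and letting $\epsilon\to 0$ as you indicate, though a residual sign of the form $(-1)^m$ would be harmless since $\chi(M)=0$ for $m$ odd, exactly as in the paper's Proof 2; the degenerate case $\dim M=0$ is just \er{pd}.
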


%\begin{remark} There seems to be some sign ambiguity between our statement and Chern's in \cite[(20)]{chern2}. Actually our definition of $\Phi$ in \er{phi} differs from that of Chern's in \cite[(9)]{chern2} by a negative sign when $\dim X$ is odd. Also  we feel Chern made a mistake in the even case in \cite[pp 680 last line]{chern2}, where the sign should not be there. 
%\end{remark}

\begin{proof}[Proof 1 of Theorem \ref{main thm}.] %Note first that in \er{del}, Furthermore, 
Notation as before. Note that $Th(NM)$ defines a homology class in $H_n(Th(TX)|_M,\bz)$. Similar to \er{hom conn}, one has in \er{del'}, $\delta'(Th(NM))=SNM$. Therefore
% one has
$$
\int_{SNM}\Phi=\int_{\delta'(Th(NM))} \Phi=\int_{Th(NM)} \delta[\Phi]=\int_{Th(NM)} \gamma_{TX}
%=\int_M\gamma_{TM}
=\chi(M)
$$
by \er{phi & thom}. Here the last equality follows from some basic knowledge about Thom classes. In more detail, one has the following  commutative diagrams in view of \er{decomp}
$$
\xymatrix{
H^0(M)\ar[d]^{\cup\gamma_{TM}}_\cong & \\
H^m(Th(TM))\ar[r]^{i^*}\ar[d]^{\cup\gamma_{NM}}_\cong & H^m(M)\ar[d]^{\cup\gamma_{NM}}_\cong\\
H^n(Th(TX)|_M)\ar[r]^{i^*} & H^n(Th(NM));}
\quad
\xymatrix{
1\ar@{|->}[d]^{\cup\gamma_{TM}} & \\
\gamma_{TM}\ar@{|->}[r]^{i^*}\ar@{|->}[d]^{\cup\gamma_{NM}} & e_{TM}\ar@{|->}[d]^{\cup\gamma_{NM}}\\
\gamma_{TX}\ar@{|->}[r]^{i^*} & i^*\gamma_{TX},}
$$
where the $i^*$ are induced by $i:M\to Th(TM)$ and $i:Th(NM)\to Th(TX)|_M$ defined by the zero section of $TM$,
% (and omitted in previous notation), 
and $e_{TM}\in H^m(M,\bz)$ is the Euler class of $M$.  
Therefore 
$$
\int_{Th(NM)} \gamma_{TX}=\int_{Th(NM)} i^*\gamma_{TX}=\int_M e_{TM}=\chi(M).
$$
\end{proof}

\begin{remark}\label{codim-1} When $\mathrm{codim}\,M=1$, 
$SNM=M^+-M^-$ as a homology class in $H_{n-1}(STX|_M,\bz)$, where $M^-=(-\vec{n})(M)$. One has $\int_{M^-}\Phi=(-1)^n\int_{M^+}\Phi$ by an analysis of \er{phi_j} and how one chooses local frames.   
%, which can be best seen from another definition of $\Phi$ in \cite[(15)]{chern} using the same local frames for both $M^+$ and $M^-$.  
Therefore when $n=\dim X$ is even both sides of \er{chern's} are zero, and when $n$ is odd \er{chern's} is just a doubling of \er{odd}. This was also asserted in \cite[pp 682]{chern2}. 
Therefore when $\codim M=1$ and $\dim X$ is even, the two statements \er{odd} and \er{chern's} are equivalent. Hence Lemma \ref{do it} implies this case of Theorem \ref{main thm} and vice versa. So our Proofs 1 and 3 of Theorem \ref{main thm}, which work in all codimensions, also imply Lemma \ref{do it}. However, the following Proof 2 only works when $\codim M\geq 2$. 
%In \emph{the following two proofs} we will only be concerned with the higher codimension case, and our above studies apply well.
\end{remark}

\begin{proof}[Proof 2 of Theorem \ref{main thm} when $\codim M\geq 2$.] 
Continue to work with the vector field $V$ introduced in \ref{V}. Consider the  projection $\partial V$ of $V|_M$ to $TM$. Generically, $\partial V$ has only isolated singularities 
with indices $\pm 1$, and the sum of its indices is $\chi(M)$ by the Poincar\'e-Hopf theorem. 
Suppose $p$ is a singular point of $\partial V$. Then $V$ is perpendicular to $M$ at $p$, and hence $\alpha_V(p)\in SNM$. Since $\ind_p\partial V=\pm 1$, it can be seen that one has transversal intersection $\alpha_V(M)\pitchfork_{\alpha_V(p)} SNM$. Furthermore, the intersection index $\iota_{\alpha_V(p)}(\alpha_V(M),SNM)=(-1)^m\ind_p \partial V$, where $m=\dim M$, for suitable orientations. 
Therefore the intersection number $\#(\alpha_V(M),SNM)=(-1)^m \chi(M)$,
which implies by \er{p.d.}
$$ 
\int_{SNM}\Phi=\#(\alpha_V(M),SNM)=(-1)^m\chi(M)=\chi(M),
$$
where the last equality holds by the obvious reason that $\chi(M)=0$ when $m$ is odd.
\end{proof}

\section{Index of vector field with non-isolated singularities}

In this section, we first use the secondary Chern-Euler class to define the index for a vector field with non-isolated singularities. This also involves the notion of blow-up of a submanifold along a vector field $V$ which vanishes on it. 
%As application, we give a third proof of Theorem \ref{main thm}. 

Let $V$ be a vector field on $X$ with non-isolated singularities on a submanifold $M$. 
%Let $B_r(M)$ and $S_r(M)$ be as in Proof 3. 
Let 
%$M\subset  U\subset X$ 
$U$ be a closed neighborhood of $M$ in $X$, and suppose its boundary $\partial U$ is smooth.
%  is smooth. 
Assume that $V$ has no singularities on $U-M$. Consider $\alpha_V:U-M\to STX$ by rescaling $V$. 

\begin{definition}\label{index} The closure of the image $\overline{\alpha_V(U-M)}$ defines a homology class in $H_n(STX|_{U},STX|_{\partial U}\cup STX|_M,\bz)$. Under the connecting homomorphism for relative homology 
$$
\partial:H_n(STX|_{U},STX|_{\partial U}\cup STX|_M,\bz)\to H_{n-1}(STX|_{\partial U}\cup STX|_M,\bz),
$$
one has
\ben
\label{boundary}
\partial(\overline{\alpha_V(U-M)})=\alpha_V(\partial U)-Bl_V(M),
\een
where $Bl_V(M)\in H_{n-1}(STX|_M,\bz)$ is defined by the above. We call it the \emph{blow-up of $M$ along $V$} as a homology class. The \emph{index of $V$ at $M$}
%, as an integer, 
is defined by
\ben
\label{def ind}
\ind_M V=\int_{Bl_V(M)}\Phi,
%\lim_{r\to 0}\int_{S_r(M)} \alpha^*\Phi
\een
%for $r$ sufficiently small, 
%is the local index for $V$ on $M$.
where $[\Phi]\in H^{n-1}(STX|_M)$ is the secondary Chern-Euler class .  
\end{definition}

The following theorem gives three ways of evaluating the index $\ind_M V$. In particular, it shows that $\ind_M V$ is \emph{always} an integer independent of the metric. 

\begin{theorem}
\label{formulas}
%Notation as before. 
\begin{enumerate}
\item In terms of the Euler curvature form and the secondary Chern-Euler class, one has
\ben\label{loc ind}
\ind_M V=\int_{\alpha_V(\partial U)} \Phi+\int_{U}\Omega.
\een
\item 
Extend $V|_{\partial U}$ to a 
%different 
vector field $\tilde V$ on $U$ with isolated singularities. Then
\ben
\label{wiggle}
\ind_M V=\ind\tilde V,
\een
where $\ind\tilde V$ denotes the sum of local indices of $\tilde V$ at its isolated singularities. 
\item Let $\partial V$ denote the tangential projection of $V$ to the tangent space of the boundary $\partial U$, and $\partial_- V$
% (resp. $\partial_+ V$) 
the restriction of $\partial V$ to the parts where $V$ points inward 
%(resp. outward) 
to $U$. Then generically one has
\ben\label{law} 
\ind_M V=\chi(U)-\ind\partial_- V,
\een
\end{enumerate}
\end{theorem}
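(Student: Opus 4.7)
The plan is to prove the three formulas in order, using part (1) as the analytic backbone, deducing part (2) by a classical excision argument, and reducing part (3) to Morse's boundary version of the Poincaré--Hopf theorem.

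For part (1), I would apply Stokes' theorem to the smooth $n$-chain $\overline{\alpha_V(U-M)}$ inside $STX|_U$. By \eqref{boundary} its boundary is $\alpha_V(\partial U)-Bl_V(M)$, and by \eqref{dphi} we have $d\Phi=-\Omega$ on all of $STX$. Since $\alpha_V$ is a section of the projection $\pi\colon STX\to X$, and $\Omega$ is pulled back from $X$, we have $\int_{\overline{\alpha_V(U-M)}}\Omega=\int_{U-M}\Omega=\int_U\Omega$ because $M$ has measure zero. Thus
\begin{equation*}
\int_{\alpha_V(\partial U)}\Phi-\int_{Bl_V(M)}\Phi=\int_{\overline{\alpha_V(U-M)}}d\Phi=-\int_U\Omega,
\end{equation*}
which rearranges to \eqref{loc ind} via the definition $\ind_M V=\int_{Bl_V(M)}\Phi$.

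For part (2), I would run the classical Poincaré--Hopf argument for $\Phi$ on the extension $\tilde V$. Let $p_1,\dots,p_r$ be the isolated zeros of $\tilde V$ in the interior of $U$ and let $B_i$ be small disjoint geodesic balls around them. The rescaled $\tilde V$ gives a section $\alpha_{\tilde V}\colon U-\bigcup_i B_i\to STX$, and applying Stokes to this $n$-chain together with $d\Phi=-\Omega$ yields
\begin{equation*}
\int_{\alpha_{\tilde V}(\partial U)}\Phi-\sum_i\int_{\alpha_{\tilde V}(\partial B_i)}\Phi=-\int_{U-\bigcup B_i}\Omega.
\end{equation*}
As the radii shrink, the right-hand side tends to $-\int_U\Omega$, while the standard local Poincaré--Hopf computation with $\Phi$ (this is the content of \cite[(25)]{chern}, using property \eqref{0th term} that $\Phi$ restricts to the unit volume form on each fiber) gives $\int_{\alpha_{\tilde V}(\partial B_i)}\Phi\to\ind_{p_i}\tilde V$. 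Since $\tilde V=V$ on $\partial U$, the first term is $\int_{\alpha_V(\partial U)}\Phi$, and comparing with \eqref{loc ind} gives $\ind\tilde V=\ind_M V$.

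For part (3), I would invoke Morse's boundary Poincaré--Hopf theorem for the extension $\tilde V$ on the compact manifold with boundary $U$. For a generic such $\tilde V$, the tangential projection $\partial\tilde V=\partial V$ on $\partial U$ has only isolated zeros (occurring exactly where $V$ is normal to $\partial U$), and Morse's theorem states that
\begin{equation*}
\ind\tilde V+\ind\partial_- V=\chi(U),
\end{equation*}
where $\partial_- V$ is the restriction of $\partial V$ to the inward-pointing locus. Combining this with \eqref{wiggle} gives \eqref{law}. The main obstacle is ensuring the genericity hypotheses are consistent: we need the same extension $\tilde V$ to simultaneously have isolated interior zeros, to realize $V|_{\partial U}$ exactly, and to have the boundary tangency locus be a finite set where $\partial V$ has isolated zeros. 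This is a standard transversality argument, but I would have to check that such an extension exists for any given $V$ vanishing on $M$ without singularities on $U-M$, so that the theorem statement of Morse applies cleanly.
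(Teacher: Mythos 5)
Your argument for parts (1) and (2) is essentially the paper's: the paper proves \eqref{loc ind} by exactly your Stokes computation (its \eqref{new one}), and for \eqref{wiggle} it simply invokes ``the standard procedure as in \cite[(25)]{chern}'' together with \eqref{0th term} to get $\ind\tilde V=\int_{\alpha_V(\partial U)}\Phi+\int_U\Omega$, which is the excision argument you spell out; comparing with \eqref{loc ind} then gives \eqref{wiggle}, as in your write-up. For part (3) you take a genuinely different (though closely related) route: you deduce \eqref{law} from part (2) plus Morse's boundary Poincar\'e--Hopf theorem $\ind\tilde V+\ind(\partial_- V)=\chi(U)$ applied to the extension $\tilde V$, whereas the paper does not pass through part (2) at all -- it cites \cite{nie} for the differential-geometric identity $\int_{\alpha_V(\partial U)}\Phi+\int_U\Omega=\chi(U)-\ind(\partial_- V)$ and combines it directly with \eqref{loc ind}. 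Your derivation is logically sound and not circular (Morse's theorem concerns $\tilde V$ with isolated zeros, while \eqref{law} concerns the blow-up index $\ind_M V$), and your closing worry about compatibility of the genericity hypotheses is not a real obstruction: extensions of $V|_{\partial U}$ with isolated interior zeros always exist by standard transversality, and the requirement that $\partial V$ have isolated zeros on $\partial U$ is exactly the ``generically'' already assumed in the statement of (c), a condition on $V$ and $\partial U$ alone, independent of the chosen extension. The trade-off is that your route imports the Law of Vector Fields \cite{morse} as an external topological input, while the paper's citation of \cite{nie} keeps the whole argument differential-geometric, so that (1) together with that identity re-proves Morse's theorem rather than relying on it.
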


\begin{proof}
\begin{enumerate}
\item 
Following  \cite[(25)]{chern}, and using \er{dphi}, Stokes' theorem, and \er{boundary}, 
%to $\alpha_V$, 
one has
\begin{equation}
\label{new one}
\int_{U} -\Omega=\int_{\overline{\alpha_V(U-M)}}-\Omega=\int_{\overline{\alpha_V(U-M)}}d\Phi
=\int_{\alpha_V(\partial U)} \Phi-\int_{Bl_V(M)} \Phi,
%=\int_{S_r(M)} \vec n^*\Phi-\int_{SNM} \Phi.\nm
\end{equation}
which then gives \er{loc ind} in view of \er{def ind}. 

Note that from \er{loc ind}, one immediately gets
$$
\ind_M V=\lim_{U\to M} \int_{\alpha_V(\partial U)} \Phi.
$$

%The right hand side is an integer. Actually, e
%Then the standard procedure as in \er{new one} gives 
\item 
Applying the standard procedure, as in \cite[(25)]{chern} and \er{new one}, and using \er{0th term}, one gets 
\ben\label{tv}
\ind\tilde V=\int_{\alpha_V(\partial U)} \Phi+\int_{U}\Omega.
\een
The index $\ind\tilde V$ clearly does not depend  on the extension $\tilde V$ from this formula, once one fixes the metric. Therefore
comparison of \er{loc ind} with \er{tv} gives \er{wiggle}. This also implies that $\ind_M V$ in Definition \ref{index} is an integer independent of the metric, since it is equal to $\ind \tilde V$.

\item 
From \cite{nie}, one knows that the right hand side of \er{loc ind}
$$
\int_{\a_V(\partial U)}\Phi+\int_U\Omega=\chi(U)-\ind(\pmv), 
$$
which proves \er{law}. Note that in the topology literature, \er{law} is called the Law of Vector Fields, and was first proved in \cite{morse}. 
\end{enumerate}
\end{proof}

As an application, we give a third proof of Theorem \ref{main thm}.

\begin{proof}[Proof 3 of Theorem \ref{main thm}.] 
We will apply \er{wiggle} to the radial vector field around $M$. 

For $r>0$ small, let $U=B_r(M)=\{x\in X|d(x,M)\leq r\}$ be a tubular neighborhood of $M$ in $X$. Then its boundary is $\partial U=S_r(M)=\{x\in X|d(x,M)= r\}$. For $x\in B_r(M)$, let $p(x)$ be the point on $M$ such that $d(x,p(x))=d(x,M)$. 
Choose $r$ sufficiently small so that $p(x)$ is unique and there is a unique shortest geodesic connecting $p(x)$ and $x$. 
Denote $s(x)=d(x,M)$ and treat $s$ as a coordinate on $B_r(M)$. Let  $\vec n:=\frac{\partial}{\partial s}$.
%defines a unit vector field on $B_r(M)-M$. 
Note that $\vec n(x)=\frac{\partial}{\partial s}(x)$ is the unit tangent vector at $x$ of the unique shortest geodesic starting from $p(x)$ and passing $x$. Now consider the vector field $V=\frac s r\vec n=\frac s r\frac{\partial}{\partial s}$. Then $V$ has singularities on $M$ corresponding to $s=0$, and on on $S_r(M)$ corresponding to $s=r$, $V=\vec n$ is the outward normal vector field. After rescaling, $\alpha_V=\vec n:B_r(M)-M\to STX$. 

%, and $\\bot S_{s_0}(M), \forall 0\leq s_0\leq r$. 

%By minimality, $\vec n(x)\in SNM$.  

Consider the closure of the image $\overline{\vec n(B_r(M)-M)}$ in $STX$. 
%In the present case, it is a submanifold with boundary, and 
Its boundary is
$$
%\label{blow-up?}
\partial (\overline{\vec n(B_r(M)-M)})=\vec n(S_r(M))-SNM.
$$
Therefore 
$
Bl_V(M)=SNM,
$
and 
$$
\int_{SNM}\Phi=\ind_M V=\ind \tilde V,
$$
by Definition \ref{index} and \er{wiggle}. Here $\tilde V$ is a generic extension with isolated singularities to $B_r(M)$ of the vector field $\vec n$ on $S_r(M)$.
%, which is the outward normal vector field. Then b
By definition, $\ind \tilde V=\chi(B_r(M))$. 
Therefore one is done 
by the homotopy invariance of Euler characteristic.
\end{proof}

\begin{remark} 
%None of  our proofs used the Gauss-Bonnet theorem for $M$. Instead they used some topological knowledge, the Poincar\'e-Hopf theorem, or the relative Gauss-Bonnet theorem for $B_r(M)$.  
Chern's computation in \cite{chern2} proves that $\int_{SNM}\Phi=\int_M \Omega_M$. This and Theorem \ref{main thm}, with our conceptual proofs, would prove  the Gauss-Bonnet theorem \er{gauss bonnet} for $M$, $\int_M \Omega_M=\chi(M)$, if one did not know it. Such a route was taken historically by \cites{allen,fenchel} to prove the Gauss-Bonnet theorem for a submanifold of higher codimension in an Euclidean space from the known result of a hypersurface. 
\end{remark}

\begin{bibdiv}
\begin{biblist}

\bib{allen}{article}{
   author={Allendoerfer, Carl B.},
   title={The Euler number of a Riemann manifold},
   journal={Amer. J. Math.},
   volume={62},
   date={1940},
   pages={243--248},
   issn={0002-9327},
%   review={\MR{0002251 (2,20e)}},
}

\bib{chern}
%{MR0011027}
{article}{
   author={Chern, Shiing-shen},
   title={A simple intrinsic proof of the Gauss-Bonnet formula for closed
   Riemannian manifolds},
   journal={Ann. of Math. (2)},
   volume={45},
   date={1944},
   pages={747--752},
   issn={0003-486X},
%   review={\MR{0011027 (6,106a)}},
}

\bib{chern2}{article}{
   author={Chern, Shiing-shen},
   title={On the curvatura integra in a Riemannian manifold},
   journal={Ann. of Math. (2)},
   volume={46},
   date={1945},
   pages={674--684},
   issn={0003-486X},
%   review={\MR{0014760 (7,328c)}},
}

\bib{cs}
{article}{
   author={Chern, Shiing Shen},
   author={Simons, James},
   title={Characteristic forms and geometric invariants},
   journal={Ann. of Math. (2)},
   volume={99},
   date={1974},
   pages={48--69},
   issn={0003-486X},
}

\bib{fenchel}{article}{
   author={Fenchel, W.},
   title={On total curvatures of Riemannian manifolds: I},
   journal={J. London Math. Soc.},
   volume={15},
   date={1940},
   pages={15--22},
   issn={0024-6107},
%   review={\MR{0002252 (2,20f)}},
}

\bib{morse}{article}{
   author={Morse, Marston},
   title={Singular Points of Vector Fields Under General Boundary
   Conditions},
   journal={Amer. J. Math.},
   volume={51},
   date={1929},
   number={2},
   pages={165--178},
   issn={0002-9327},
%   review={\MR{1506710}},
}

\bib{nie}{article}{
%@MISC{nie-2009,
  author = {Nie, Zhaohu},
  title = {Differential-geometric proofs of the Law of Vector Fields},
  %On Sha's secondary Chern-Euler class},
  %url = {http://www.citebase.org/abstract?id=oai:
  journal={arXiv:0901.2611},
  year = {2009}
}

\bib{sha}
%{MR1740983}
{article}{
   author={Sha, Ji-Ping},
   title={A secondary Chern-Euler class},
   journal={Ann. of Math. (2)},
   volume={150},
   date={1999},
   number={3},
   pages={1151--1158},
   issn={0003-486X},
%   review={\MR{1740983 (2001g:57057)}},
}

\end{biblist}
\end{bibdiv}

\end{document}